\newcommand\mytitle{Holomorphic families of forms, operators and $C_0$-semigroups}
\newcommand\lhead{H. Vogt, J. Voigt}
\newcommand\rhead{Operator functions defined by forms}
\numberwithin{equation}{section}
\newtheorem{theorem}{Theorem}[section]
\newtheorem{proposition}[theorem]{Proposition}
\theoremstyle{definition}
\newtheorem{remark}[theorem]{Remark}
\newtheorem{remarks}[theorem]{Remarks}
 \mathchardef\ordinarycolon\mathcode`\:
\newcommand\smid{\nonscript \mskip2mu plus2mu {\mid}%
\nonscript \mskip2mu plus2mu}     
\newcommand\scpr[2]{{(#1\smid#2)}}
\newcommand\ran{\operatorname{\rm ran}}
\newcommand\Arg{\mathop{\mathrm{Arg}}}
\renewcommand{\Re}{\operatorname{Re}}
\renewcommand{\Im}{\operatorname{Im}}
\newcommand{\R}{\mathbb{R}\nonscript\hskip.03em}
\newcommand{\N}{\mathbb{N}\nonscript\hskip.03em}
\newcommand{\C}{\mathbb{C}\nonscript\hskip.03em}
\newcommand{\K}{\mathbb{K}\nonscript\hskip.03em}
\newcommand\cA{\mathcal A}
\newcommand\cL{\mathcal L}
\newcommand\rma{{\rm (a) }}
\newcommand\rmb{{\rm (b) }}
\newcommand\eul{{\rm e}}
\newcommand\drfrac[2]{\frac{\raisebox{-0.1em}{$#1$}}{\raisebox{0.1em}{$#2$}}}
\newcommand\scdot{\mkern2.5mu{\cdot}\mkern2.5mu}
\let\qedhere@ams\qedhere
\def\qedhere{\@ifnextchar[{\@qedhere}{\qedhere@ams}}
\def\@qedhere[#1]{\tag*{\raisebox{-#1ex}{\qedhere@ams}}}
\def\env@cases{%
  \let\@ifnextchar\new@ifnextchar
  \left\lbrace
  \def\arraystretch{1.1}%
  \array{@{\,}l@{\quad}l@{}}%
}
\renewcommand\section{\@startsection {section}{1}{\z@}%
                                     {-3.25ex \@plus -1ex \@minus -.2ex}%
                                     {1.5ex \@plus.2ex}%
                                     {\normalfont\large\bfseries}}
\newcommand\set[2]{\bigl\{#1{;}\;#2\bigr\}}
\newcommand\dom{\mathop{\rm dom}}
\newcommand\tmo{^{-1}}
\renewcommand\le{\leqslant}
\renewcommand\ge{\geqslant}
\newcommand\sse{\subseteq}
\newcommand\slim{\mathop{\rm s\kern.08em\mbox{\rm -}lim}} 
\newcommand\abstracttext{\noindent
If $z\mapsto a_z$ is a holomorphic function with values in the sectorial forms
in a Hilbert space, then the associated operator valued 
function $z\mapsto A_z$ is resolvent holomorphic. We give a proof of this
result of Kato, on the basis of the Lax-Milgram lemma. We
also show that the $C_0$-semigroups $T_z$ generated by~$-A_z$ depend
holomorphically on~$z$.

\vspace{8pt}

\noindent
MSC 2010: 47A07, 47B44, 47D06
\vspace{2pt}

\noindent
Keywords: sectorial form, operator function, resolvent holomorphic, 
$C_0$-semigroup
}
\begin{document}

\medmuskip=4mu plus 2mu minus 3mu
\thickmuskip=5mu plus 3mu minus 1mu
\belowdisplayshortskip=\belowdisplayskip

\title\mytitle

\author{Hendrik Vogt and J\"urgen Voigt}

\date{}

\maketitle

\begin{abstract}
\abstracttext
\end{abstract}

\section*{Introduction}
\label{intro}

The main objective of this note is to present a proof of the following theorem 
connecting holomorphic dependence of forms in a Hilbert space with holomorphy 
of the associated operator function.

\begin{theorem}\label{thm-hol-dep}
Let $H$ be a complex Hilbert space, $V\sse H$ a dense subspace, and let 
$\Omega\sse\C$ be open. For each $z\in\Omega$ let $a_z$ be a closed sectorial 
form in $H$ with domain $\dom(a_z)=V$, and let $A_z$ denote the (m-sectorial) 
operator associated with $a_z$. Assume that for all $x,y\in 
V$ the function $\Omega\ni z\mapsto a_z(x,y)\in \C$ is holomorphic.

Then the 
function $\Omega\ni z\mapsto A_z$ is resolvent holomorphic, and the sectoriality
of $(A_z)_{z\in\Omega}$ is locally uniform.
\end{theorem}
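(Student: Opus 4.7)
The plan is to represent the resolvents via the Lax--Milgram lemma and then to transfer holomorphy from the forms to the resolvents through the variational characterization of $(A_z+\lambda)\tmo$. Fix $z_0\in\Omega$. Since $a_{z_0}$ is closed and sectorial, $V$ carries a natural Hilbert-space structure, for instance the inner product $\Re a_{z_0}(x,y)+M\scpr{x}{y}$ for sufficiently large $M>0$; every $a_z$ should then be viewed as a sesquilinear form on this Hilbert space $V$.

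The technical heart of the argument is to show that for each compact $K\sse\Omega$ the family $(a_z)_{z\in K}$ is uniformly bounded on $V\times V$. Pointwise holomorphy of $z\mapsto a_z(x,y)$ immediately gives scalar boundedness on $K$ for each fixed pair $(x,y)$; promoting this to a bound uniform in $(x,y)$ on the unit ball of $V$ requires two applications of the uniform boundedness principle (first to control the dependence on $y$ for fixed $x$, then on $x$). One also needs the $V$-norms coming from the various $a_z$ to be pairwise equivalent, which follows from the closed graph theorem applied to the identity between these Hilbert-space structures on $V$. From the resulting uniform form bound, together with pointwise continuity of $z\mapsto\Re a_z(x,x)$, local uniform sectoriality of $(A_z)$ should follow: the numerical ranges fit into a common sector for $z$ in a neighborhood of $z_0$, and $a_z+\lambda\scpr{\scdot}{\scdot}$ is coercive on $V$ with uniform constants for a suitable $\lambda\in\R$.

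With uniform coercivity in hand, Lax--Milgram identifies $u_z:=(A_z+\lambda)\tmo f$ as the unique solution in $V$ of $a_z(u_z,v)+\lambda\scpr{u_z}{v}=\scpr{f}{v}$ for all $v\in V$, and gives a uniform bound $\|u_z\|_V\le C\|f\|_H$. Subtracting the equations at $z$ and $z_0$ yields
\[
a_{z_0}(u_z-u_{z_0},v)+\lambda\scpr{u_z-u_{z_0}}{v}=-(a_z-a_{z_0})(u_z,v),\qquad v\in V.
\]
A Cauchy-integral representation for $(a_z-a_{z_0})/(z-z_0)$, combined with the uniform form bound from the previous step, should produce a bounded sesquilinear form $b_{z_0}$ on $V$ such that $(a_z-a_{z_0})/(z-z_0)\to b_{z_0}$ uniformly on $V$-bounded sets. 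Dividing the displayed equation by $z-z_0$ and invoking Lax--Milgram once more, $(u_z-u_{z_0})/(z-z_0)$ converges in $V$ to the unique solution of $a_{z_0}(u',v)+\lambda\scpr{u'}{v}=-b_{z_0}(u_{z_0},v)$. This gives differentiability in $H$-norm of $z\mapsto(A_z+\lambda)\tmo f$ for every $f\in H$; together with the uniform resolvent bound in $\cL(H)$, resolvent holomorphy follows.

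The main obstacle is clearly the first step: converting pointwise-in-$(x,y)$ holomorphy into a locally uniform bound on the form norms of $a_z$. Holomorphy is an a priori scalar condition, whereas sectoriality and coercivity are uniform-in-$(x,y)$ properties; bridging this gap requires careful use of Banach--Steinhaus and the closed graph theorem. Once this functional-analytic passage is secured, the remaining steps are fairly mechanical applications of Lax--Milgram and Cauchy's integral formula.
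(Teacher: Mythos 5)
Your proposal follows essentially the same route as the paper's proof: fix a Hilbert-space structure on $V$ (pairwise equivalence of the form norms via the closed graph theorem), regard $z\mapsto a_z$ as an $\cL(V,V^*)$-valued map $z\mapsto\cA_z$, get local uniform boundedness and then holomorphy of this map from the pointwise holomorphy hypothesis, establish uniform coercivity near $z_0$, and recover the resolvent of $A_z$ from the Lax--Milgram solution operator. The paper packages your two applications of Banach--Steinhaus (plus the passage from weak to norm holomorphy) as a citation of Kato's Theorem~III.3.12, and it replaces your hand-made difference-quotient computation by the identity $A_z\tmo=j\cA_z\tmo k$ together with the standard fact that the inverse of a holomorphic, invertible operator family is holomorphic; these differences are cosmetic.

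One step is justified too weakly as written: you claim local uniform sectoriality and uniform coercivity follow from ``the uniform form bound together with pointwise continuity of $z\mapsto\Re a_z(x,x)$''. A uniform bound on $\|\cA_z\|$ plus pointwise continuity does not control $\|\cA_z-\cA_{z_0}\|$ (a family can be uniformly bounded and converge in the weak operator topology while staying at norm distance $1$), and without smallness of $\|\cA_z-\cA_{z_0}\|$ you cannot transfer the coercivity estimate $\Re a_{z_0}(u)+\lambda\|u\|_H^2\ge\alpha\|u\|_V^2$ from $z_0$ to nearby $z$ uniformly in $u$, nor fit the numerical ranges into a common sector. What closes this step is \emph{norm} continuity of $z\mapsto\cA_z$, which you do in fact have available: uniform boundedness plus pointwise holomorphy yields norm-holomorphy via the Cauchy integral formula (this is exactly the content of Kato's Theorem~III.3.12, and it is how the paper gets $\|\cA_z-\cA_{z_0}\|\le\frac1{2C}$ on a small ball, whence $|a_z(u)-a_{z_0}(u)|\le\frac12\Re a_{z_0}(u)$ and the uniform sector). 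With that correction the argument is complete.
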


This theorem is due to Kato \cite[Theorem~VII.4.2]{kat-80} and is proved there via
a representation of m-sectorial operators involving the square roots of their real parts.
We will present a proof that might be regarded as more natural;
our crucial observation is a formula expressing the 
operator associated with a form in terms of the `Lax-Milgram operator'; see 
Proposition~\ref{prop-LM} below.

A rather striking application of Theorem \ref{thm-hol-dep}, due to B.\;Simon, 
has been given in Kato \cite[Addendum]{kat-78}. In this application a Trotter 
product formula for sectorial forms is derived from the validity  of  the 
corresponding Trotter product formula for symmetric forms.

Concerning notation, we recall that a form $a$ is called 
\emph{sectorial} if there exist $\gamma\in\R$ and $C\ge0$ such that
\[
\mathopen|\Im a(u)| \le C(\Re a(u)-\gamma\|u\|^2)\qquad (u\in \dom(a)),
\]
and similarly for an operator in $H$. This means that the numerical range of 
the form (or the operator) is contained in a sector with vertex $\gamma$ and 
semi-angle $\arctan C$.

In Kato \cite[Section~VII.4.2]{kat-80}, a function $z\mapsto a_z$ as in 
Theorem~\ref{thm-hol-dep} is called \emph{holomorphic of type}~(a). 
We call a function $\Omega\ni z\mapsto A_z$
with values in the closed operators in $H$
\emph{resolvent holomorphic} if the following condition is satisfied.
For all $z_0\in\Omega$ and some (and then 
all) $\lambda\in\rho(A_{z_0})$ there exists an open 
neighbourhood $\Omega_{z_0}$ such that $\lambda\in\rho(A_z)$ for all 
$z\in\Omega_{z_0}$ and the function $\Omega_{z_0}\ni 
z\mapsto(\lambda-A_z)\tmo\in\cL(H)$ is holomorphic;
see Kato \cite[Theorem~VII.1.3]{kat-80}.

\medskip

In Section~\ref{sec-LM} we recall the Lax-Milgram lemma and present
the resulting formula mentioned above.
Section~\ref{sec-proof-mt} contains the proof of 
Theorem~\ref{thm-hol-dep}. In Section~\ref{sec-hol-dep-sg} we sketch a result 
that, in the particular context of Theorem~\ref{thm-hol-dep}, implies that the 
associated $C_0$-semigroups depend holomorphically on $z$.

\section{The Lax--Milgram lemma}
\label{sec-LM}

Let $V$ be a Hilbert space over $\K\in\{\R,\C\}$, and let $a\colon V\times 
V\to\K$ be a coercive bounded sesquilinear form, where \emph{coercive} means 
that 
there exists $\alpha>0$ such that
\[
\Re a(u)\ge \alpha\|u\|_V^2\qquad (u\in V).
\]
Let $V^*$ denote the anti-dual space of $V$, with the $V^*$-$V$-pairing denoted 
by $\langle\cdot,\cdot\rangle$. Then
\[
\langle\cA u,v\rangle := a(u,v)\qquad (u,v\in V)
\]
defines a bounded operator $\cA\colon V\to V^*$. The Lax--Milgram lemma states that
$\cA$ is an isomorphism, and $\|\cA\tmo\| \le 1/\alpha$; see 
\cite[Theorem 2.1]{lax-mil-54}, \cite[Satz 4.9]{alt-85} (for the complex 
case).

Let $H$ be a Hilbert space over $\K$,
and let $j\in\cL(V,H)$ be an injective operator with dense range.
Then 
\[
A:=\set{(x,y)\in H\times H}{\exists\mkern2mu u\in V\colon ju=x,\ a(u,v)=\scpr 
y{jv}_H\ \, (v\in V)}
\]
defines the operator $A$ \emph{associated with} $(a,j)$.
There exists $c>0$ such that $\|ju\|_H\le c\|u\|_V$ for all $v\in V$.
If 
$x\in \dom(A)$, then
there exists $u\in V$ such that $ju=x$ and $a(u,u)=\scpr{Ax}x$; hence
\[
\Re\scpr{Ax}x=\Re a(u,u)\ge\alpha\|u\|_V^2\ge 
\frac{\alpha}{c^2}\|x\|_H^2.
\]
This inequality means that $A$ is \emph{strictly accretive}; see 
Kato~\cite[Chapter V, \S3.11]{kat-78}.

\begin{proposition}\label{prop-LM}
In the situation described above the operator~$A$ is 
strictly m-accretive, and
\begin{equation}\label{eq-prop-LM}
A\tmo = j\cA\tmo k,
\end{equation}
with the canonical injection
$k\in\cL(H,V^*)$ defined by $H\ni y\mapsto \scpr y{j(\cdot)}_H\in V^*$ 
(the `anti-dual operator' of $j$).
\end{proposition}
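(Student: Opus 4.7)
The plan is to verify formula~\eqref{eq-prop-LM} directly; strict m-accretivity will then follow as a by-product, since the formula exhibits $A\tmo$ as a bounded operator defined on all of~$H$. The key observation is that the defining condition for $A$ reformulates neatly in terms of $\cA$ and~$k$: for $u,v\in V$ and $y\in H$ one has $a(u,v)=\langle\cA u,v\rangle$ and $\scpr y{jv}_H=\langle k(y),v\rangle$, so $(x,y)\in A$ if and only if there exists $u\in V$ with $ju=x$ and $\cA u=k(y)$.

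Before producing the inverse I would check that $A$ is single-valued (the definition presents it only as a subset of $H\times H$). If $(x,y_1),(x,y_2)\in A$ with corresponding vectors $u_1,u_2\in V$, then injectivity of~$j$ forces $u_1=u_2=:u$, and then $\scpr{y_1-y_2}{jv}_H=a(u,v)-a(u,v)=0$ for every $v\in V$; density of $\ran j$ gives $y_1=y_2$, so $A$ really is an operator.

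Next I would verify that $B:=j\cA\tmo k\in\cL(H)$ is a two-sided inverse of~$A$. For any $y\in H$, set $u:=\cA\tmo k(y)\in V$; then $\cA u=k(y)$, so taking $x:=ju=By$ yields $(x,y)\in A$ by the reformulation above. Hence $By\in\dom(A)$ and $A(By)=y$ for every $y\in H$. Conversely, if $x\in\dom(A)$ and $y=Ax$, the defining $u\in V$ satisfies $\cA u=k(y)$; injectivity of~$\cA$ then forces $u=\cA\tmo k(y)$, whence $x=ju=By$ and $BAx=x$. Together these identities yield~\eqref{eq-prop-LM}.

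Since $A\tmo=B\in\cL(H)$, $A$ is automatically closed and $0\in\rho(A)$; combined with the strict accretivity established in the paragraph preceding the proposition, this gives strict m-accretivity. The only real obstacle is the single-valuedness step, which crucially uses both standing hypotheses on~$j$ (injectivity and dense range); once the reformulation $\cA u=k(y)$ is in place, the rest is a mechanical composition of the Lax--Milgram isomorphism $\cA\tmo$ with the maps $j$ and~$k$.
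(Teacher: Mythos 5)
Your proof is correct and follows essentially the same route as the paper: both apply the Lax--Milgram isomorphism to $ky$ to produce $u=\cA\tmo ky$ and read off surjectivity of $A$ together with the formula $A\tmo y=j\cA\tmo ky$. You additionally spell out the single-valuedness of $A$ and the two-sided-inverse check, details the paper leaves implicit (single-valuedness follows from injectivity of $j$ and density of $\ran j$, injectivity of $A$ from the strict accretivity established just before the proposition), but this is elaboration rather than a different argument.
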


\begin{proof}
Let $y\in H$.
Then $\scpr y{j(\cdot)}_H\in V^*$, so
by the Lax-Milgram lemma there exists $u\in V$ such that
\[
a(u,v) = \scpr y {jv}_H\qquad (v\in V),
\]
i.e., $\cA u=ky$.
By the definition of $A$, this implies that $x:=ju\in\dom(A)$ and $Ax=y$.
This shows that $y\in\ran(A)$ and $A\tmo y=x=j\cA\tmo ky$.
We conclude that $A$ is strictly m-accretive and 
that \eqref{eq-prop-LM} holds.
\end{proof}

\section{Proof of the main theorem}
\label{sec-proof-mt}

We start with a preliminary step of the proof of Theorem~\ref{thm-hol-dep}; 
this also serves to fix some 
notation. We note that for each $z\in\Omega$ there exist $\gamma_z\in\R$ and 
$C_z\ge0$ such that
\[
\mathopen|\Im a_z(u)| \le C_z(\Re a_z(u)-\gamma_z\|u\|_H^2)\qquad (u\in V).
\]
The closedness of $a_z$ means that the space $(V,\|\scdot\|_{a_z})$, with the 
norm
\[
\|u\|_{a_z}=\bigl(\Re a_z(u)+(1-\gamma_z)\|u\|_H^2\bigr)^{1/2}\qquad (u\in V),
\] 
is complete. Using that the embedding 
$(V,\|\scdot\|_{a_z})\hookrightarrow(H,\|\scdot\|_H)$ is 
continuous and applying the closed graph theorem we conclude that the norms 
$\|\scdot\|_{a_z}$ are pairwise equivalent. For notational convenience we can 
therefore 
assume that $(V,\scpr\cdot\cdot_V)$ is a Hilbert space with a norm equivalent 
to all norms $\|\scdot\|_{a_z}$. 

\begin{proof}[Proof of Theorem \ref{thm-hol-dep}]
For $z\in\Omega$ we define $\cA_z\in\cL(V,V^*)$ by
\[
\langle\cA_z u,v\rangle := a_z(u,v)\qquad(u,v\in V)
\]
and note that the hypotheses together with Kato \cite[Theorem~III.3.12]{kat-80}
yield the holomorphy of $\Omega\ni 
z\mapsto\cA_z\in\cL(V,V^*)$.

Let $z_0\in\Omega$.
Without loss of generality we assume that $z_0=0$ and 
that $a_0$ is sectorial with vertex $\gamma_0=1$; then there exists $C>0$ such 
that 
\[
\|u\|_V^2\le C\|u\|_{a_0}^2=C\Re a_0(u)\qquad(u\in V).
\]
There exists $r>0$ such that $B[0,r]\sse\Omega$ and 
$\|\cA_z-\cA_0\|\le\frac{1}{2C}$ for all $z\in B[0,r]$. 
This implies that for all $z\in B[0,r]$, $u\in V$ one has
\begin{equation}\label{eq-norm-ineq-0}
|a_z(u)-a_0(u)|\le\frac1{2C}\|u\|_V^2\le\frac12\Re a_0(u),
\end{equation}
in particular
\begin{equation}\label{eq-norm-ineq}
\Re a_z(u) \ge \frac12\Re a_0(u)\ge\frac1{2C}\|u\|_V^2.
\end{equation}

This inequality shows that $a_z$ is coercive for all 
$z\in B[0,r]$.
Therefore Proposition \ref{prop-LM} implies that $A_z$ is strictly m-accretive, 
and
\begin{equation}\label{eq-LM-hol}
A_z\tmo = j\cA_z\tmo k,
\end{equation}
where $j\colon V\hookrightarrow H$ denotes the embedding and $k$ is as in 
Proposition~\ref{prop-LM}.
The holomorphy of $z\mapsto \cA_z$ and the existence of the inverse
$\cA_z\tmo\in\cL(V^*,V)$ for all $z\in B(0,r)$
imply that $B(0,r)\ni z\mapsto\cA_z\tmo\in\cL(V^*,V)$ is holomorphic;
cf.\ \cite[bottom of p.\,365]{kat-80}.
By \eqref{eq-LM-hol}, this implies the holomorphy of
$B(0,r)\ni z\mapsto A_z\tmo\in\cL(H)$.

The inequalities \eqref{eq-norm-ineq-0} and \eqref{eq-norm-ineq} imply
\begin{equation*}
\mathopen|\Im a_z(u)| 
\le \mathopen|\Im  a_0(u)|+\frac12\Re a_0(u)
\le\Bigl(C_0+\frac12\Bigr)\Re a_0(u)\le(2C_0+1)\Re a_z(u).
\end{equation*}
This estimate shows that the form $a_z$ is sectorial with semi-angle 
$\arctan (2C_0+1)$ and  vertex $0$, for all $z\in B[0,r]$.
\end{proof}

\begin{remark}\label{rem-hol-dep}
We will show here that the equivalence of the norms $\|\scdot\|_{a_z}$ is locally 
uniform.
Note that this was not needed explicitly in the proof of Theorem~\ref{thm-hol-dep}.

Putting ourselves into the context of the proof of Theorem~\ref{thm-hol-dep} we
show the uniform equivalence of the norms on $B(0,r)$. For $z\in B(0,r)$ the 
form $a_z$ is sectorial with vertex $0$; so we will use the norm
\[
\|u\|_{a_z}= \bigl(\Re a_z(u) +\|u\|_H^2\bigr)^{1/2}\qquad (u\in V).
\]
From \eqref{eq-norm-ineq} and \eqref{eq-norm-ineq-0} we know that
\[
\frac12 \Re a_0(u) \le \Re a_z(u) \le \frac32\Re a_0(u)\qquad (u\in V)
\]
for all $z\in B(0,r)$, and this implies
\[
\frac12\|u\|_{a_0}^2\le \|u\|_{a_z}^2\le \frac32\|u\|_{a_0}^2\qquad (u\in V).
\]
\end{remark}

\section{Holomorphic dependence of $C_0$-semigroups}
\label{sec-hol-dep-sg}

In the context of Theorem~\ref{thm-hol-dep}, every operator $-A_z$ is the 
generator of a holomorphic $C_0$-semigroup $T_z$. The following theorem shows 
that the function $z\mapsto T_z$ is also holomorphic, in a suitable sense.
Note, however, that in this result no holomorphy of the semigroups is required.

\begin{theorem}\label{thm-hol-dep-sg}
Let $X$ be a complex Banach space, and let $\Omega\sse\C$ be open. For 
$z\in\Omega$ let $T_z$ be a $C_0$-semigroup on $X$, with generator $A_z$, and 
assume that there exists $\omega\in\R$ such that
\[
M:=\sup\set{\eul^{-\omega t}\|T_z(t)\|}{t\ge0,\ z\in\Omega} <\infty.
\]
Assume further that $\Omega\ni z\mapsto (\lambda-A_z)\tmo\in\cL(X)$ is 
holomorphic, for some $\lambda>\omega$.
Then

\rma the function $\Omega\ni z\mapsto T_z(\cdot)x\in C([0,t_1];X)$ is 
holomorphic for all $t_1>0$, $x\in X$,

\rmb the function $\Omega\ni z\mapsto T_z(t)\in\cL(X)$ is holomorphic for all 
$t\ge 0$.
\end{theorem}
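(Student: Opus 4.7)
The plan is to approximate each $T_z$ by Yosida regularisations and then appeal to Vitali's theorem for Banach-space valued holomorphic functions. From the hypothesis together with the definition of \emph{resolvent holomorphic} recalled in the introduction (in particular the ``some, and then all, $\lambda\in\rho(A_{z_0})$'' clause), a standard resolvent-identity/Neumann-series argument combined with the Hille--Yosida bound $\|(n-A_z)\tmo\|\le M/(n-\omega)$ extends the given holomorphy of $z\mapsto(\lambda-A_z)\tmo$ to holomorphy of $z\mapsto(n-A_z)\tmo$ for every $n\in\N$ with $n>\omega$. I then introduce the Yosida approximants
\[
A_z^{(n)}:=n^2(n-A_z)\tmo-nI\in\cL(X),\qquad T_z^{(n)}(t):=\eul^{tA_z^{(n)}},
\]
for which $z\mapsto A_z^{(n)}$ is $\cL(X)$-valued holomorphic. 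The Hille--Yosida estimate $\|(n-A_z)^{-k}\|\le M/(n-\omega)^k$ gives $\|T_z^{(n)}(t)\|\le M\eul^{tn\omega/(n-\omega)}$, so that for $n$ sufficiently large there is a uniform bound $\|T_z^{(n)}(t)\|\le M_1$ valid for all $z\in\Omega$ and $t\in[0,t_1]$.

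Setting $E:=C([0,t_1];X)$, the power series
\[
T_z^{(n)}(t)x=\sum_{k=0}^\infty \frac{t^k}{k!}(A_z^{(n)})^k x
\]
converges absolutely in $E$, uniformly for $z$ in compact subsets of~$\Omega$, and each summand is an $E$-valued holomorphic function of~$z$. Hence, for each fixed $n$, the map $F_n\colon z\mapsto T_z^{(n)}(\cdot)x$ is holomorphic from $\Omega$ into $E$. The classical Hille--Yosida convergence argument (first for $x\in\dom A_z$, then extended by density using the uniform bound $M_1$) yields $T_z^{(n)}(t)x\to T_z(t)x$ as $n\to\infty$, uniformly for $t\in[0,t_1]$; that is, $F_n(z)\to F(z):=T_z(\cdot)x$ in $E$, pointwise in $z\in\Omega$.

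These are precisely the hypotheses of Vitali's theorem for $E$-valued holomorphic functions (a locally uniformly bounded sequence of holomorphic functions on a domain that converges pointwise must converge locally uniformly, to a holomorphic limit), and this yields~(a). Assertion~(b) follows at once: for fixed $t\ge 0$ and $x\in X$, composing~(a) with the evaluation $E\to X$, $f\mapsto f(t)$, shows that $z\mapsto T_z(t)x$ is $X$-valued holomorphic, so $z\mapsto T_z(t)$ is strongly holomorphic; combined with the local norm bound $\|T_z(t)\|\le M\eul^{\omega t}$ this is equivalent to norm holomorphy into $\cL(X)$. The main subtle point I anticipate is ensuring that the Hille--Yosida convergence is genuinely uniform in $t\in[0,t_1]$ (and not merely pointwise), together with a precise formulation of Vitali's theorem in the Banach-space valued setting; both are standard but are easy to gloss over.
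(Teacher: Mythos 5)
Your proof is correct and follows essentially the same route as the paper: a uniformly bounded family of holomorphic approximants converging strongly, uniformly for $t\in[0,t_1]$, to $T_z(\cdot)x$, followed by a Vitali-type theorem for Banach-space-valued holomorphic functions (the paper cites \cite[Proposition A.3]{abhn-01} for exactly this step). The only difference is the choice of approximants: you use the Yosida exponentials $\eul^{tA_z^{(n)}}$, whereas the paper uses the exponential formula $T_z(t)=\slim_{n\to\infty}\bigl(I-\frac tn A_z\bigr)^{-n}$, whose terms are holomorphic in $z$ directly as powers of resolvents, so the power-series argument you need for $z\mapsto T_z^{(n)}(\cdot)x$ is avoided.
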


\begin{proof}[Sketch of the proof]
Without loss of generality we assume $\omega=0$; then
\begin{equation}\label{eq-hd-sg-1}
\bigl\|(\lambda-A_z)^{-n}\bigr\|\le \frac M{\lambda^n}\qquad (n\in \N,\ 
\lambda>0).
\end{equation}
The holomorphy hypothesis implies that $\Omega\ni z\mapsto(\lambda-A_z)\tmo\in 
\cL(X)$ is holomorphic for all $\lambda>0$; see Kato \cite[Theorem~VII.1.3]{kat-80}.
The exponential formula shows that
\begin{equation}\label{eq-hd-sg-2}
T_z(t)=\slim_{n\to\infty}\Bigl(I-\drfrac tn A_z\Bigr)^{-n}\qquad (t\ge0),
\end{equation}
and the strong convergence is uniform for $t$ in bounded subsets of $[0,\infty)$;
see Pazy \cite[Theorem~I.8.3]{pazy-83}.

From \eqref{eq-hd-sg-1} and \eqref{eq-hd-sg-2} one obtains the assertions, 
using standard facts of the theory of Banach space valued holomorphic 
functions;
see \cite[Proposition A.3]{abhn-01}.
\end{proof}

\begin{remarks}
(a) In Theorem~\ref{thm-hol-dep-sg}, assume additionally that all the 
semigroups 
$T_z$ are holomorphic on a common sector 
$\Sigma_\theta:= \set{\tau\in\C}{\mathopen|\Arg\tau|<\theta}$, with some 
$\theta\in(0,\pi/2]$, and that
\[
M:=\sup\set{\eul^{-\omega\Re\tau}\|T_z(\tau)\|}{\tau\in\Sigma_\theta,\ 
z\in\Omega}<\infty,
\]
for some $\omega\in\R$. Then as above one can show that
\[
\Omega\ni z\mapsto T_z(\cdot)x\in C\bigl((\Sigma_{\theta'}\cup\{0\})\cap 
B_\C(0,r);X\bigr)
\]
is holomorphic for all $x\in X$, $\theta'\in(0,\theta)$, $r>0$.

(b) The statement presented in part (a) above is a well-established result; 
see~\cite[Theorem IX.2.6]{kat-78}. The proof given in this reference uses 
the representation of the semigroups expressed by contour integrals. 
The authors are not aware of a source in the literature for the result stated 
in Theorem~\ref{thm-hol-dep-sg}, for non-holomorphic semigroups.
\end{remarks}

{\frenchspacing

}
\bigskip

\noindent
Hendrik Vogt\\
Fachbereich Mathematik\\
Universit\"at Bremen\\
Postfach 330 440\\
28359 Bremen, Germany\\
{\tt 
hendrik.vo\rlap{\textcolor{white}{hugo@egon}}gt@uni-\rlap{\textcolor{white}{%
hannover}}bremen.de}\\[3ex]
J\"urgen Voigt\\
Technische Universit\"at Dresden\\
Fachrichtung Mathematik\\
01062 Dresden, Germany\\
{\tt 
juer\rlap{\textcolor{white}{xxxxx}}gen.vo\rlap{\textcolor{white}{yyyyyyyyyy}}%
igt@tu-dr\rlap{\textcolor{white}{%
zzzzzzzzz}}esden.de}

\end{document}